\newtheorem{thm}{Theorem}[section]
\newtheorem*{thm*}{Theorem}
\newtheorem{cor}[thm]{Corollary}
\newtheorem*{cor*}{Corollary}
\newtheorem*{lem*}{Lemma}
\newtheorem{prop}[thm]{Proposition}
\newtheorem*{prop*}{Proposition}
\theoremstyle{definition}
\newtheorem{defn}{Definition}[section]
\newtheorem*{defn*}{Definition}
\newtheorem*{conjecture}{Conjecture}
\theoremstyle{remark}
\newtheorem{rem}{Remark}[section]
\newtheorem*{rem*}{Remark}
\newtheorem{example}{Example}[section]
\newtheorem*{problem*}{Problem}
\newtheorem*{question*}{Question}
\newcommand{\Q}{\mathbb Q}
\newcommand{\bH}{\mathbb H}
\newcommand{\Z}{\mathbb Z}
\newcommand{\PP}{\mathbb P}
\newcommand{\cP}{\mathcal P}
\newcommand{\wt}{\widetilde}
\DeclareMathOperator{\pExp}{Exp}
\DeclareMathOperator{\pLog}{Log}
\DeclareMathOperator{\Sym}{Sym}
\title{Plethystic identities and mixed Hodge structures of character varieties}
\author{Anton Mellit}
\date{\today}
\begin{document}

\begin{abstract}
I demonstrate how certain identities for Macdonald's polynomials established by Garsia, Haiman and Tesler, together with the conjecture of Hausel, Letellier and Villegas imply explicit relations between mixed Hodge polynomials of different character varieties. 
\end{abstract}

\maketitle

\onehalfspacing

\section{Symmetric functions}
The set of all partitions is denoted $\cP$ and we have the following symmetric functions for each $n\in\Z_{>0}$, $\lambda\in\cP$, say in the variables $x_1,x_2,\ldots$.
\begin{align*}
&\text{elementary:} & e_n =& \sum_{i_1<\cdots<i_n} x_{i_1}\ldots x_{i_n}, & e_\lambda&=\prod e_{\lambda_i}\\
&\text{homogeneous:} & h_n =& \sum_{i_1+i_2+\cdots=n} x_1^{i_1} x_2^{i_2} \cdots, & h_\lambda&=\prod h_{\lambda_i}\\
&\text{monomial:} & m_\lambda=& \sum_{v\in \text{permutations of $\lambda_1,\ldots,\lambda_k,0,0,\ldots$}} x_1^{v_1} x_2^{v_2} \cdots,\\
&\text{power sums:} & p_n =& \sum_{i} x_i^n, & p_\lambda&=\prod p_{\lambda_i}\\
&\text{Schur:} & s_\lambda=&\sum_{\text{$T$ a semistandard tableaux of shape $\lambda$}} x^T.
\end{align*}
We set $h_0=e_0=1$. $p_0$ is undefined.

The ring of symmetric functions is denoted by $\Sym$ and over $\Q$ it is just the polynomial ring in the variables $p_1, p_2,\ldots$.

\section{$\lambda$-rings}
All our rings are commutative and contain $\Q$.
\begin{defn}
A $\lambda$-ring (over $\Q$) is a ring $R$ endowed with the following operation. For each $x\in R$ and $F\in\Sym$ we have $F[x]\in\Sym$ in such a way that
\begin{enumerate}
\item For each $x\in R$ the map $F\to F[x]$ is a ring homomorphism from $\Sym$ to $R$.
\item For each $n\in\Z_{>0}$ the map $x \to p_n[x]$ from $R$ to $R$ is a ring homomorphism.
\item We have $p_m[p_n[x]]=p_{mn}[x]$ and $p_1[x]=x$ for each $m,n\in\Z_{>0}$ and $x\in R$.
\end{enumerate}
\end{defn}

\begin{rem}
Equivalently, we could define a $\lambda$-ring to be a ring together with a family of commuting ring homomorphisms $p_2, p_3,\ldots$ indexed by prime numbers.
\end{rem}

\begin{example}
We endow the polynomial ring $\Q[x_1,x_2,\ldots,x_k]$ with the \emph{standard} $\lambda$-structure by the rule $p_n[x_i]=x_i^n$ for each $n\in\Z_{>0}$ and $i=1,2,\ldots,k$. Similarly for the rings of rational functions, power series and Laurent power series, or rings of mixed type, i.e. rings like $\Q(q,t)[x_1,x_2,\ldots]$. It is common to consider only $\lambda$-rings with the standard $\lambda$-structure and call the operation $F\to F[x]$ \emph{plethystic substitution}.
\end{example}

\begin{example}
Any ring $R$ can be endowed with the \emph{stupid} $\lambda$-structure by setting $p_n[x]=x$ for all $n\in\Z_{>0}$.
\end{example}

\begin{defn}
An element $x$ in a $\lambda$-ring $R$ is called \emph{monomial} if $p_n[x]=x^n$ for each $n\in\Z_{>0}$.
\end{defn}

Monomial elements possess the following properties, which are easily verified for power sums and then extended to all symmetric functions.
\begin{prop}\label{prop:ususaleval}
For a sequence of monomial elements $x_1,x_2,\ldots$ in a $\lambda$-ring $R$ and any symmetric function $F$ we have
$$
F[x_1+x_2+\cdots] = F(x_1,x_2,\ldots).
$$
On the left hand side we apply the operation $F[-]$ to the element $x_1+x_2+\cdots$, while on the right hand side we view $F$ as a polynomial and evaluate it on $x_1,x_2,\ldots$.
\end{prop}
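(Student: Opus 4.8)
The plan is to reduce everything to power sums and then propagate by the ring-homomorphism property in axiom (1). Fix the element $y = x_1 + x_2 + \cdots \in R$ (interpreting the sum in whichever sense makes it a well-defined element of $R$ — in the applications either all but finitely many $x_i$ vanish, or one works in a graded/filtered completion in which such series converge). Consider the two maps $\Sym \to R$ given by $\phi\colon F \mapsto F[y]$ and $\psi\colon F \mapsto F(x_1,x_2,\ldots)$, the latter being ``evaluate the symmetric function $F$ as an actual (symmetric) function of the $x_i$''. By axiom (1), $\phi$ is a ring homomorphism; and $\psi$ is a ring homomorphism because evaluation of symmetric functions at a fixed point is one. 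Since over $\Q$ we have $\Sym = \Q[p_1,p_2,\ldots]$ (Section~1), to conclude $\phi = \psi$ it is enough to check $\phi(p_n) = \psi(p_n)$ for every $n \in \Z_{>0}$.

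For that single computation, invoke axiom (2): the map $x \mapsto p_n[x]$ is additive, so $p_n[y] = p_n[x_1] + p_n[x_2] + \cdots$. Each $x_i$ is monomial, so $p_n[x_i] = x_i^n$ by definition, and therefore $p_n[y] = \sum_i x_i^n = p_n(x_1,x_2,\ldots) = \psi(p_n)$. Multiplicativity of both $\phi$ and $\psi$ then upgrades this from $p_n$ to arbitrary products $p_\lambda$, and $\Q$-linearity extends it to all of $\Sym$, giving $F[y] = F(x_1,x_2,\ldots)$ for every symmetric function $F$.

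The only point that warrants care rather than cleverness is the treatment of the infinite sum: one must know that $x_1 + x_2 + \cdots$ names an element of $R$, that the ring homomorphism $F[-]$ and the additive map $p_n[-]$ each commute with this infinite summation, and that $\psi$ is likewise continuous in the appropriate topology. In the finite case this is vacuous and the argument above is complete as written; in the completed settings used later these continuity statements hold by construction, so the same proof applies verbatim. No further obstacle is expected.
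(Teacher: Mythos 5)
Your proof is correct and follows exactly the route the paper indicates (the paper only remarks that the properties ``are easily verified for power sums and then extended to all symmetric functions''): check the identity on $p_n$ via additivity of $p_n[-]$ and the monomial condition $p_n[x_i]=x_i^n$, then extend to all of $\Sym=\Q[p_1,p_2,\ldots]$ using that both sides are ring homomorphisms. Your added care about the meaning of the infinite sum is a reasonable refinement of the same argument.
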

\begin{prop}
For a monomial element $u\in R$, any $F\in\Sym$ homogeneous of degree $d$, and $X\in R$ we have 
$$
F[uX] = u^d F[X].
$$
\end{prop}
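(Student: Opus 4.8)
The plan is to reduce everything to power-sum symmetric functions, just as in the verification underlying Proposition \ref{prop:ususaleval}. Since $R\supseteq\Q$ and $\Sym$ over $\Q$ is the polynomial ring on $p_1,p_2,\ldots$, the degree-$d$ homogeneous component of $\Sym$ has $\Q$-basis $\{p_\lambda:\lambda\in\cP,\ |\lambda|=d\}$. By axiom (1), for any fixed $Y\in R$ the map $F\mapsto F[Y]$ is a ring homomorphism $\Sym\to R$, hence in particular $\Q$-linear; applying this with $Y=uX$ and with $Y=X$, it suffices to establish $p_\lambda[uX]=u^d\,p_\lambda[X]$ for every partition $\lambda$ of $d$.

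First I would treat a single power sum $p_n$ with $n\ge 1$. By axiom (2) the map $x\mapsto p_n[x]$ is a ring homomorphism $R\to R$, so $p_n[uX]=p_n[u]\,p_n[X]$. Since $u$ is monomial, $p_n[u]=u^n$ by definition, and therefore $p_n[uX]=u^n\,p_n[X]$, which is exactly the asserted identity in the case $F=p_n$, $d=n$.

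Next I would bootstrap to an arbitrary $p_\lambda=\prod_i p_{\lambda_i}$ with $|\lambda|=d$. Using once more that $F\mapsto F[uX]$ is a ring homomorphism, together with the single-power-sum case, I get $p_\lambda[uX]=\prod_i p_{\lambda_i}[uX]=\prod_i u^{\lambda_i}p_{\lambda_i}[X]=u^{\sum_i\lambda_i}\prod_i p_{\lambda_i}[X]=u^{|\lambda|}p_\lambda[X]=u^d p_\lambda[X]$. By the $\Q$-linearity reduction of the first paragraph this extends to every homogeneous $F$ of degree $d$; the degenerate case $d=0$ (where $F$ is a scalar) is trivial since $u^0=1$.

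I do not anticipate a real obstacle: the computation for power sums is immediate from axioms (1) and (2) and the definition of a monomial element. The only point meriting slight care is the reduction step — that it is enough to check the identity on the family $\{p_\lambda:|\lambda|=d\}$ — which relies on these products spanning the degree-$d$ component over $\Q$ and on plethystic evaluation being $\Q$-linear, both of which are immediate.
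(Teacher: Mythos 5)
Your proof is correct and follows exactly the route the paper indicates (``easily verified for power sums and then extended to all symmetric functions''): check $p_n[uX]=u^np_n[X]$ via axiom (2) and monomiality of $u$, then extend to all homogeneous $F$ by multiplicativity and $\Q$-linearity of plethystic evaluation. Nothing further is needed.
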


\begin{defn}[Free $\lambda$-rings]
The free $\lambda$-ring in generators $X_1,X_2,\ldots,X_k$ will be denoted $\Sym[X_1,X_2,\ldots,X_k]$. It is simply the $k$-th tensor power of $\Sym$. It can be thought of as the ring of symmetric functions in $k$ sets of variables. Capital letters $X, Y, Z, W$ are reserved to denote arbitrary elements in arbitrary $\lambda$-rings, or equivalently, generators of free $\lambda$-rings. This is contrary to small letters, which will denote elements with special properties with respect to the $\lambda$-structure, usually monomial elements.
\end{defn}

\begin{prop}[Basic identities]\label{prop:basicident}
For each $n\in\Z_{\geq 0}$, we have
$$
h_n[-X]=(-1)^n e_n[X],
$$
$$
h_n[X+Y] = \sum_{i=0}^n h_i[X] h_{n-i}[Y],\qquad e_n[X+Y] = \sum_{i=0}^n e_i[X] e_{n-i}[Y],
$$
$$
h_n[XY] = \sum_{\lambda\vdash n} h_\lambda[X] m_\lambda[Y].
$$
\end{prop}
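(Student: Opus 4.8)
The plan is to push everything through power sums, where the $\lambda$-ring axioms make the identities transparent. Recall the classical generating-function expansions in $\Sym[[t]]$,
$$\sum_{n\geq 0} h_n\,t^n \eq \exp\Bigl(\sum_{k\geq 1}\frac{p_k}{k}\,t^k\Bigr),\qquad \sum_{n\geq 0} e_n\,t^n \eq \exp\Bigl(\sum_{k\geq 1}\frac{(-1)^{k-1}p_k}{k}\,t^k\Bigr),$$
obtained by taking logarithms of $\prod_i(1-x_it)^{-1}=\sum_n h_nt^n$ and $\prod_i(1+x_it)=\sum_n e_nt^n$. Since $\Sym=\Q[p_1,p_2,\dots]$, for a fixed element $X$ of a $\lambda$-ring $R$ the homomorphism $F\mapsto F[X]$ of axiom~(1) is nothing but the substitution $p_k\mapsto p_k[X]$; applying it termwise in $t$ yields, in $R[[t]]$,
$$\sum_{n\geq 0} h_n[X]\,t^n \eq \exp\Bigl(\sum_{k\geq 1}\frac{p_k[X]}{k}\,t^k\Bigr),\qquad \sum_{n\geq 0} e_n[X]\,t^n \eq \exp\Bigl(\sum_{k\geq 1}\frac{(-1)^{k-1}p_k[X]}{k}\,t^k\Bigr).$$

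Given this, the first two identities I would simply read off by comparing exponents. By axiom~(2) the map $x\mapsto p_k[x]$ is a ring homomorphism, so $p_k[X+Y]=p_k[X]+p_k[Y]$ and $p_k[-X]=-p_k[X]$. Hence $\exp\bigl(\sum_k\tfrac{p_k[X+Y]}{k}t^k\bigr)$ factors as $\exp\bigl(\sum_k\tfrac{p_k[X]}{k}t^k\bigr)\exp\bigl(\sum_k\tfrac{p_k[Y]}{k}t^k\bigr)$, giving
$$\sum_{n\geq 0}h_n[X+Y]\,t^n\eq\Bigl(\sum_{i\geq 0} h_i[X]\,t^i\Bigr)\Bigl(\sum_{j\geq 0} h_j[Y]\,t^j\Bigr);$$
the coefficient of $t^n$ is the claimed addition formula, and the same computation with $(-1)^{k-1}p_k$ throughout gives it for $e_n$. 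Likewise $\sum_n h_n[-X]\,t^n=\exp\bigl(-\sum_k\tfrac{p_k[X]}{k}t^k\bigr)=\exp\bigl(\sum_k\tfrac{(-1)^{k-1}p_k[X]}{k}(-t)^k\bigr)=\sum_n e_n[X](-t)^n$, whence $h_n[-X]=(-1)^ne_n[X]$.

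The third identity is where the real work lies, and the \emph{key step} is a reduction to ordinary symmetric functions. Expanding $h_n,h_\lambda,m_\lambda$ in power sums and using that $p_k$ is also multiplicative (axiom~(2)), so $p_k[XY]=p_k[X]p_k[Y]$, both sides of $h_n[XY]=\sum_{\lambda\vdash n}h_\lambda[X]m_\lambda[Y]$ lie in $\Q[p_1[X],p_2[X],\dots]\otimes\Q[p_1[Y],p_2[Y],\dots]$, which is precisely the free $\lambda$-ring $\Sym[X,Y]$ and is a polynomial ring on these algebraically independent symbols. Consequently the $\lambda$-ring homomorphism $\Sym[X,Y]\to\Sym\otimes\Sym$ sending $X\mapsto x_1+x_2+\cdots$ and $Y\mapsto y_1+y_2+\cdots$ (sums of standard monomial variables) is injective, so it suffices to verify the identity for these particular $X,Y$. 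There $XY=\sum_{i,j}x_iy_j$ with each $x_iy_j$ monomial (a product of monomial elements is monomial, again because $p_k$ is multiplicative), and by Proposition~\ref{prop:ususaleval} what remains is the classical symmetric-function identity $h_n\bigl(\{x_iy_j\}_{i,j}\bigr)=\sum_{\lambda\vdash n}h_\lambda(x)\,m_\lambda(y)$.

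To finish I would prove this last identity by iterating the addition formula already established: for $Y=y_1+\cdots+y_N$ with the $y_j$ monomial, $h_n[XY]=\sum_{n_1+\cdots+n_N=n}\prod_j h_{n_j}[y_jX]$, and since $h_d[y_jX]=y_j^{\,d}h_d[X]$ by the homogeneity Proposition, each term equals $\bigl(\prod_j y_j^{\,n_j}\bigr)h_\mu[X]$ where $\mu$ is the partition formed by the nonzero $n_j$; grouping sequences $(n_j)$ by $\mu$, and observing that the distinct rearrangements of $\mu$ padded by zeros are each counted exactly once, turns this into $\sum_{\mu\vdash n}h_\mu[X]\,m_\mu(y_1,\dots,y_N)$, and letting $N\to\infty$ gives the identity. (Alternatively one may quote the Cauchy identity $\prod_{i,j}(1-x_iy_j)^{-1}=\sum_\lambda h_\lambda(x)m_\lambda(y)$ and extract the degree-$n$ part.) I expect the main obstacle to be exactly this third identity: one has to be at ease with the principle that a plethystic identity is a polynomial identity in the power sums of its arguments and may therefore be checked after specialising those arguments to sums of monomial variables, and then carry out the elementary but slightly fiddly bookkeeping above; by contrast the first two identities fall out immediately once the power-sum generating functions are in hand.
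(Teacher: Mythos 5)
Your proof is correct; the paper states these identities without proof, and your strategy --- reduce everything to power sums via the Newton generating functions and the $\lambda$-ring axioms, then handle $h_n[XY]$ by specialising the free variables to sums of monomial elements and invoking the degree-$n$ Cauchy identity --- is exactly the standard argument the paper implicitly relies on (compare its remark that such properties are ``easily verified for power sums and then extended to all symmetric functions''). No gaps.
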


\subsection{Examples of computations}
We first evaluate $e_n[k]$ for each $k\in\Z_{>0}$. By Proposition \ref{prop:ususaleval} we have
$$
e_n[k] = e_n(1,1,\ldots,1)=\#\{1\leq i_1<\cdots< i_n\leq k\}=\binom{k}{n}. 
$$
Consider $\Q[x]$ with the stupid $\lambda$-ring structure $p_i[x]=x$. Then $e_n[x]$ must be a polynomial which agrees with $\binom{k}{n}$ for each $x=k\in\Z_{>0}$. Therefore
\begin{equation}\label{eq:binom1}
e_n[x] = \binom{x}{n} = \frac{x(x-1)\cdots (x-k+1)}{k!}\qquad (p_i[x]=x).
\end{equation}

Now let's take $R=\Q(q,u)$ with the standard $\lambda$-structure $p_i[q]=q^i$, $p_i[u]=u^i$ and try to evaluate $e_n\left[\frac{1-u}{1-q}\right]$. On one hand we have
$$
e_n\left[\frac{1-uq}{1-q}\right] = e_n\left[1+q\frac{1-u}{1-q}\right] = q^n e_n\left[\frac{1-u}{1-q}\right] + q^{n-1} e_{n-1}\left[\frac{1-u}{1-q}\right].
$$
On the other hand
$$
e_n\left[\frac{1-uq}{1-q}\right] = e_n\left[u+\frac{1-u}{1-q}\right] = e_n\left[\frac{1-u}{1-q}\right] + u e_{n-1}\left[\frac{1-u}{1-q}\right].
$$
Thus we have
$$
e_n\left[\frac{1-u}{1-q}\right] = \frac{q^{n-1}-u}{1-q^n}\; e_{n-1}\left[\frac{1-u}{1-q}\right] = \cdots=\frac{(1-u)\cdots(q^{n-1}-u)}{(1-q)\cdots(1-q^n)}.
$$
We can also compute
$$
h_n\left[\frac{1-u}{1-q}\right] = (-1)^n u^n e_n\left[\frac{1-u^{-1}}{1-q}\right] = \frac{(1-u)\cdots(1- u q^{n-1})}{(1-q)\cdots(1-q^n)}
$$
We can make a substitution $u=q^k$ because $q^k$ is monomial and obtain
\begin{equation}\label{eq:binom2}
e_{n}\left[\frac{q^k-1}{q-1}\right] = q^{\binom{n}{2}} \binom{k}{n}_q.
\end{equation}
For $q=1$ this also gives the classical binomial coefficient. We see that different deformations of the classical binomial coefficients are simply evaluations of $e_n$ in different $\lambda$-rings.

\begin{rem}
This example also illustrates the fact that one cannot do arbitrary substitutions in $\lambda$-ring identities. For instance, if a formula is proved under the assumption $p_i[q]=q^i$ on can only do substitutions $q=a$ such that $a$ also satisfies $p_i[a]=a^i$. Thus (\ref{eq:binom2}) is true if we replace $q$ by $1$, or $0$, or $q^{-1}$, but not something completely random. Similarly, we cannot set $x=q$ in (\ref{eq:binom1}). On the other hand, if we have an identity with free variables, like the ones in Proposition \ref{prop:basicident}, we can substitute arbitrary values.
\end{rem}

\section{Plethystic exponential}
\begin{defn}
$$
\pExp[X]:=\sum_{n=0}^\infty h_n[X]=\exp\left(\sum_{n=1}^\infty \frac{p_n[X]}{n}\right).
$$
\end{defn}

The series is understood in the completion of $\Sym[X]$ with respect to the degree. Several properties are immediate from Proposition \ref{prop:basicident}.
\begin{prop}
We have
$$
\pExp[X+Y] = \pExp[X]\pExp[Y],\qquad
\pExp[X Y] = \sum_{\lambda\in\cP} h_\lambda[X] m_\lambda[Y].
$$
\end{prop}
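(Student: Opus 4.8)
The plan is to derive both identities directly from Proposition \ref{prop:basicident} by expanding $\pExp$ as its defining series $\sum_{n\ge 0} h_n[-]$ and rearranging, taking care that every rearrangement makes sense in the degree-completion of the relevant free $\lambda$-ring.

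For the first identity I would start from
$$
\pExp[X+Y] = \sum_{n=0}^\infty h_n[X+Y]
$$
and substitute the addition formula $h_n[X+Y]=\sum_{i=0}^n h_i[X]\,h_{n-i}[Y]$ from Proposition \ref{prop:basicident}. Since $h_i[X]$ is homogeneous of degree $i$ in $X$ and $h_{n-i}[Y]$ is homogeneous of degree $n-i$ in $Y$, the resulting double sum $\sum_{i,j\ge 0} h_i[X]\,h_j[Y]$ has only finitely many terms of each total degree, so it converges in the completion of $\Sym[X,Y]$ and equals the Cauchy product $\bigl(\sum_i h_i[X]\bigr)\bigl(\sum_j h_j[Y]\bigr)=\pExp[X]\pExp[Y]$. (As a cross-check one can instead use the exponential form: $x\mapsto p_n[x]$ is additive by axiom (2), so $\sum_n p_n[X+Y]/n=\sum_n p_n[X]/n+\sum_n p_n[Y]/n$, and $\exp$ converts the sum into the product.)

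For the second identity I would likewise write $\pExp[XY]=\sum_{n\ge 0} h_n[XY]$ and plug in $h_n[XY]=\sum_{\lambda\vdash n} h_\lambda[X]\,m_\lambda[Y]$, again from Proposition \ref{prop:basicident}. Here $h_\lambda[X]$ and $m_\lambda[Y]$ are each homogeneous of degree $|\lambda|$, so summing over all $n$ and reindexing by the single partition $\lambda$ (which ranges over $\cP$) is legitimate in the completion and yields $\sum_{\lambda\in\cP} h_\lambda[X]\,m_\lambda[Y]$.

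The only real point requiring attention — rather than a genuine obstacle — is the bookkeeping of degrees: one must confirm that after each reindexing the family being summed is still "degreewise finite," i.e. that for each $d$ only finitely many terms contribute in degree $d$, so that the manipulations take place in the completed ring where $\pExp$ was defined in the first place. Everything else is a formal consequence of Proposition \ref{prop:basicident}.
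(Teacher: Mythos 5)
Your proof is correct and is exactly the argument the paper has in mind: the paper simply states that these properties are ``immediate from Proposition \ref{prop:basicident},'' and your expansion of $\pExp$ into $\sum_n h_n$ followed by the addition and multiplication formulas (with the routine degreewise-finiteness check in the completion) is precisely that deduction.
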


\begin{rem}
If $x$ is such that $p_n[x]=0$ for $n>1$, then the plethystic exponential becomes the usual exponential:
$$
\pExp[x]=\exp(x).
$$
If $k$ is such that $p_n[k]=k$ for all $n$, then for any $X$
$$
\pExp[kX] = (\pExp[X])^k.
$$
For monomial $x_1,x_2,\ldots,x_n$ we have
$$
\pExp\left[\sum_{i=1}^n x_i\right] = \prod_{i=1}^n \frac{1}{1-x_i}.
$$
\end{rem}

The operation $\pExp$ can be inverted. The inverse operation is called \emph{plethystic logarithm} and is denoted by $\pLog$. It is defined for series with constant term $1$.

\subsection{Hall pairing}
The exponential is related to the Hall pairing.
\begin{defn}
The Hall pairing $(-,-)$ is the unique scalar product on $\Sym$ such that $\{h_\lambda\}$, $\{m_\lambda\}$ form dual bases.
\end{defn}

Then $\pExp[XY]$ is the \emph{reproducing kernel} for the Hall pairing. This means the following
\begin{prop}[Reproducing kernel identity]
For any $F[X]\in\Sym[X]$ we have
$$
(F[X], \pExp[XY])_X = F[Y].
$$
\end{prop}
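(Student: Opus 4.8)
The plan is to reduce the identity to the defining property of the Hall pairing — that $\{h_\lambda\}$ and $\{m_\lambda\}$ are dual bases of $\Sym$ — together with the kernel expansion $\pExp[XY] = \sum_{\lambda\in\cP} h_\lambda[X]\, m_\lambda[Y]$ from the preceding proposition. Both ingredients are already available, so the argument is short.

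First I would expand $F$ in the monomial basis. Since $\Sym[X]$ is a polynomial ring, $F[X]$ is a \emph{finite} linear combination
$$
F[X] = \sum_{\mu\in\cP} c_\mu\, m_\mu[X],\qquad c_\mu\in\Q,\ c_\mu=0\text{ for all but finitely many }\mu.
$$
Substituting the kernel expansion and using bilinearity of $(-,-)_X$ in the $X$-variable gives
$$
(F[X],\pExp[XY])_X = \sum_{\mu\in\cP}\sum_{\lambda\in\cP} c_\mu\,(m_\mu, h_\lambda)\, m_\lambda[Y].
$$
By the duality defining the Hall pairing, $(m_\mu, h_\lambda) = \delta_{\mu\lambda}$, so the double sum collapses to $\sum_{\mu\in\cP} c_\mu\, m_\mu[Y] = F[Y]$, which is exactly the claim.

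The only point that deserves a word of care is that $\pExp[XY]$ is an infinite series living in a degree-completion, so one must check that pairing a fixed $F$ against it is well defined and commutes with the sum. This is immediate from the fact that the Hall pairing respects the grading by degree: if $F$ is homogeneous of degree $d$ then $(F[X], h_\lambda[X])_X = 0$ unless $|\lambda| = d$, so only the finitely many terms of $\pExp[XY]$ with $|\lambda|=d$ can contribute, and by linearity the same holds for arbitrary $F$. Once this is observed there is no real obstacle; the proposition is essentially a repackaging of the orthogonality of the $h$- and $m$-bases, and if anything the ``hard part'' is purely notational — keeping straight that the pairing is taken in the $X$-slot while $Y$ is carried along as a parameter, so that the output naturally lands in $\Sym[Y]$.
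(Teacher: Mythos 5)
Your argument is correct and is exactly the intended one: the paper states this proposition immediately after recording the two ingredients you use (the duality $(m_\mu,h_\lambda)=\delta_{\mu\lambda}$ defining the Hall pairing and the expansion $\pExp[XY]=\sum_{\lambda}h_\lambda[X]\,m_\lambda[Y]$) and leaves the routine verification implicit. Your extra remark about the pairing against the completed series being well defined degree by degree is a sensible precaution and does not change the substance.
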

 
The subscript $X$ means we take the scalar product with respect to $X$. A consequence of this fact is that for any pair of dual bases $\{a_\lambda\}$, $\{b_\lambda\}$ we have
$$
\pExp[XY] = \sum_{\lambda} a_\lambda[X] b_\lambda[Y].
$$

\subsection{Coproduct}
Perhaps a more conceptual way to understand the Hall product is through the Hopf algebra structure.
\begin{defn}
The coproduct on $\Sym[X]$ is defined in the following way. We have
$$
\Delta:\Sym[X] \to \Sym[X]\otimes\Sym[X] \cong \Sym[X,Y]
\qquad 
(\Delta F)[X, Y] = F[X+Y].
$$
\end{defn}

The coproduct is dual to the product in the following way:
\begin{prop}\label{prop:coprod}
For each $F,G,G'\in\Sym[X]$ we have
$$
(F[X], G[X] G'[X])_X = (F[X+Y], G[X] G'[Y])_{X,Y}.
$$
\end{prop}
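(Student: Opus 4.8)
This proposition is the statement that the coproduct $\Delta$ is adjoint to multiplication under the Hall pairing, and the cleanest route is to test both sides against the reproducing kernel $\pExp[XZ]$, exploiting its two defining features: the Reproducing kernel identity $(F[X],\pExp[XZ])_X=F[Z]$ and the multiplicativity $\pExp[(X+Y)Z]=\pExp[XZ]\,\pExp[YZ]$.

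I would begin by reducing to a spanning family. Both sides of the claimed equality are $\Q$-linear in $F$, so it is enough to check it for $F$ running over the basis $\{h_\lambda\}$ of $\Sym[X]$. These cases can be handled simultaneously by substituting $F[X]=\pExp[XZ]=\sum_\lambda h_\lambda[X]\,m_\lambda[Z]$ for a fresh generator $Z$: extracting the coefficient of $m_\lambda[Z]$ from each side recovers exactly the identity for $F=h_\lambda$, and the $m_\lambda[Z]$ are linearly independent. (Throughout, the $\pExp$ series are read degree by degree, so each pairing against one of them has only finitely many nonzero terms and is well defined.) It thus remains to prove
$$
(\pExp[XZ],\,G[X]G'[X])_X\;=\;(\pExp[(X+Y)Z],\,G[X]G'[Y])_{X,Y},
$$
where the left side is the original left side after the substitution and the right side uses $F[X+Y]=\pExp[(X+Y)Z]$.

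For the left side, the Reproducing kernel identity applied to $GG'\in\Sym[X]$ (with the kernel's second variable renamed $Z$) gives $(GG')[Z]=G[Z]\,G'[Z]$, the last step because evaluation $H\mapsto H[Z]$ is a ring homomorphism. For the right side, I would first use $\pExp[(X+Y)Z]=\pExp[XZ+YZ]=\pExp[XZ]\,\pExp[YZ]$, and then the fact that $(-,-)_{X,Y}$ is by definition the tensor product of the pairings in the $X$- and $Y$-variables, which makes the pairing factor:
$$
(\pExp[XZ]\,\pExp[YZ],\,G[X]G'[Y])_{X,Y}=(\pExp[XZ],G[X])_X\cdot(\pExp[YZ],G'[Y])_Y=G[Z]\,G'[Z],
$$
using the Reproducing kernel identity twice more. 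Both sides equal $G[Z]\,G'[Z]$, which completes the argument.

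I do not anticipate a genuine difficulty here; the only points requiring attention are the bookkeeping of the reduction — linear independence of $\{m_\lambda\}$ so that matching $m_\lambda[Z]$-coefficients is valid, and the degreewise finiteness underlying all pairings with $\pExp$ — and the convention that $(-,-)_{X,Y}$ is the tensor-product pairing. As an alternative that bypasses generating functions, one can instead take $F=p_\mu$, expand $\Delta p_\mu=\prod_i(p_{\mu_i}\otimes 1+1\otimes p_{\mu_i})$, and pair against $p_\lambda$ using the standard orthogonality of the power sums; this works but is more computational, so I would present the reproducing-kernel proof as the main line.
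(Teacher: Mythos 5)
Your proof is correct and is essentially the paper's own argument: both reduce the statement to the reproducing kernel $\pExp[XZ]$, factor $\pExp[(X+Y)Z]=\pExp[XZ]\pExp[YZ]$, and apply the reproducing kernel identity in $X$ and $Y$ to see that both sides equal $G[Z]G'[Z]$ (paired against $F[Z]$). The only cosmetic difference is that the paper writes $F[X]=(\pExp[XZ],F[Z])_Z$ and substitutes, whereas you set $F=\pExp[XZ]$ and extract $m_\lambda[Z]$-coefficients; these are the same computation.
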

\begin{proof}
For any $F$ we can write
$$
F[X] = (\pExp[XZ], F[Z])_Z.
$$
We substitute this in the right hand side of the identity we need to prove and obtain
$$
((\pExp[(X+Y)Z], F[Z])_Z, G[X] G'[Y])_{X,Y} = (G[Z]G'[Z], F[Z])_Z
$$
by computing the scalar product with respect to $X$ and $Y$.
\end{proof}

This can be pictured as follows:

\vspace{.5cm}
\begin{center}
\begin{tikzpicture}
\node (1) {$G$};
\node (2) [below =1.3cm of 1] {$G'$};
\node (3) [below right=0.5cm and 0.5cm of 1] {$\cdot$}; 
\node (4) [right = 1cm of 3] {$F$};

\path[draw, thick, ->]
(1) edge node {} (3)
(2) edge node {} (3)
(3) edge node {} (4);

\node (5) [right=2cm of 4] {$=$};
\node (6) [right = 2cm of 5] {$F$};
\node (7) [right = 1cm of 6] {$\Delta$};
\node (8) [above right = 0.5cm and 0.5cm of 7] {$G$};
\node (9) [below =1.3cm of 8] {$G'$};

\path[draw, thick, ->]
(6) edge node {} (7)
(7) edge node {} (8)
(7) edge node {} (9);
\end{tikzpicture}
\end{center}

Finally an important property is 
\begin{prop}
For $F,G\in\Sym[X]$ we have
$$
(F[XY], G[X])_X = (F[X], G[XY])_X.
$$
\end{prop}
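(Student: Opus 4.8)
The plan is to follow exactly the strategy used above for Proposition~\ref{prop:coprod}: use the reproducing kernel $\pExp[XZ]$ to ``resolve'' $F$, perform the $X$-pairing by the reproducing kernel identity, and then read off the right-hand side after renaming the auxiliary variable.

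Concretely, I would start from the identity $H[X]=(\pExp[XZ],H[Z])_Z$, valid for every $H[X]\in\Sym[X]$. Substituting $X\mapsto XY$ in it — this is the ring homomorphism $\Sym[X]\to\Sym[X,Y]$ sending $X$ to $XY$, and it commutes with the $Z$-pairing because that pairing is linear over all coefficients not involving $Z$ — turns it into $F[XY]=(\pExp[XYZ],F[Z])_Z$. I would then plug this into the left-hand side and interchange the order of the two scalar products to get
$$
(F[XY],G[X])_X=\bigl((\pExp[XYZ],G[X])_X,\;F[Z]\bigr)_Z.
$$
Now the inner pairing is handled by the reproducing kernel identity once more, this time in the variable $X$ with the second slot occupied by the element $YZ$: $(\pExp[X(YZ)],G[X])_X=G[YZ]$. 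Hence the left-hand side equals $(G[YZ],F[Z])_Z=(F[Z],G[YZ])_Z$, and renaming $Z$ to $X$ together with $YX=XY$ yields $(F[X],G[XY])_X$, as desired.

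The only delicate point is bookkeeping for the iterated scalar products in the three variables $X,Y,Z$: one must justify that $(-,-)_Z$ may be pulled past $(-,-)_X$ and that the substitution $X\mapsto XY$ commutes with $(-,-)_Z$. Both follow from the fact that the Hall pairing in one variable is bilinear over the ring of coefficients not involving that variable and is compatible with the degree-completion in which $\pExp$ is defined; once one notes that $\pExp[XYZ]$ lives in the completed tensor product $\Sym[X]\otimes\Sym[Y]\otimes\Sym[Z]$, on whose three factors the two pairings and the substitution act independently, this is routine. An entirely formal alternative that avoids completions altogether is to expand $F$ and $G$ in the dual bases $\{h_\lambda\}$ and $\{m_\lambda\}$ and apply $h_n[XY]=\sum_{\lambda\vdash n}h_\lambda[X]\,m_\lambda[Y]$ on both sides; this reduces the claim to a direct manipulation of the structure constants, but the reproducing-kernel argument above is cleaner.
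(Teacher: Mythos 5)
Your proof is correct and is exactly the argument the paper intends: it resolves $F$ via the reproducing kernel $F[W]=(\pExp[WZ],F[Z])_Z$ and evaluates the inner pairing by the reproducing kernel identity, which is precisely the strategy of the paper's proof of Proposition~\ref{prop:coprod} that the paper invokes here with the words ``Similar to Proposition~\ref{prop:coprod}''. The bookkeeping points you flag (bilinearity over coefficients not involving the paired variable, compatibility with the substitution $X\mapsto XY$) are handled correctly.
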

\begin{proof}
Similar to Proposition \ref{prop:coprod}
\end{proof}

\section{Macdonald polynomials}
Denote by $M_{\leq\lambda}$ the subspace of $\Sym$ spanned by $m_\mu$ for $\mu\leq \lambda$ with respect to the dominance order. Schur polynomials can be identified up to a scalar factor by the properties 
$$
s_\lambda[X] \in M_{\leq \lambda}, \qquad s_\lambda[-X] \in M_{\leq \lambda'},
$$
where $\lambda'$ denotes the conjugate partition. The Macdonald polynomials can be defined similarly. From now on $\Sym$ will be the symmetric functions over the field $\Q(q,t)$ with the standard $\lambda$-structure.

\begin{defn}[\cite{garsia1996remarkable}]
Macdonald polynomials\footnote{Usually these polynomials are denoted $\wt{H}_\lambda$. We omit the tilde from the notation.} $H_\lambda\in \Sym$ are unique polynomials such that
$$
H_\lambda[(t-1)X]\in M_{\leq \lambda},\qquad H_\lambda[(q-1)X]\in M_{\leq \lambda'},\qquad H_\lambda[1]=1.
$$
\end{defn}

\begin{example}
Directly from the definition we deduce the two special cases
$$
H_{(n)}[X] = \frac{e_n\left[\frac{X}{q-1}\right]}{e_n\left[\frac{1}{q-1}\right]}
=\frac{h_n\left[\frac{X}{1-q}\right]}{h_n\left[\frac{1}{1-q}\right]},
$$
$$
H_{1^n}[X] = \frac{e_n\left[\frac{X}{t-1}\right]}{e_n\left[\frac{1}{t-1}\right]}
=\frac{h_n\left[\frac{X}{1-t}\right]}{h_n\left[\frac{1}{1-t}\right]}.
$$
From the defining property of the Schur functions we deduce specializations
$$
H_\lambda[X]|_{t=q^{-1}} = \frac{s_\lambda\left[\frac{X}{1-q}\right]}{s_\lambda\left[\frac{1}{1-q}\right]} \qquad (\lambda\in\cP).
$$
\end{example}

It follows that if we define the Macdonald scalar product as follows
$$
(F[X], G[X])_*:=(F[X], G[QX])\qquad Q=(q-1)(1-t)
$$
the Macdonald polynomials become orthogonal with respect to the Macdonald scalar product. Let
$$
\alpha_\lambda:=(H_\lambda, H_\lambda)_* = \prod_{a,l}(q^a - t^{l+1})(q^{a+1} - t^l),
$$
where the product runs over the arm- and leg-lengths of the hooks of $\lambda$.

The reproducing kernel for the Macdonald scalar product is the element
$$
\pExp\left[\frac{XY}{Q}\right] = \sum_{\lambda\in\cP} \frac{H_\lambda[X] H_\lambda[Y]}{\alpha_\lambda}.
$$

\section{HLV conjecture}
Define for each $k=1,2,3,\ldots$
$$
\Omega[X_1,\ldots,X_k]:=\sum_{\lambda\in\cP} \frac{H_\lambda[X_1] H_\lambda[X_2]\cdots H_\lambda[X_k]}{\alpha_\lambda}.
$$
Also define $\bH$ by the formula
$$
\Omega[X_1,\ldots,X_k] = \pExp\left[\frac{\bH[X_1,X_2,\ldots,X_k]}{Q}\right].
$$
The genus $0$ version of the conjecture of Hausel, Letellier and Rodriguez-Villegas is
\begin{conjecture}[HLV, genus $0$]
All coefficients of the series $\bH$ are polynomials in $q$ and $t$, and for each tuple of partitions $\lambda^{(1)}$, $\lambda^{(2)}$,\ldots,$\lambda^{(k)}$ the scalar product
$$
\bH(h_{\lambda^{(1)}}, \ldots, h_{\lambda^{(k)}}):=(\bH[X_1,\ldots,X_k],\; h_{\lambda^{(1)}}[X_1] \cdots h_{\lambda^{(k)}}[X_k])_{X_1,\ldots,X_k}
$$
coincides with the mixed Hodge polynomial of the moduli space of local systems on $\PP^1$ with $k$ punctures whose local monodromies around the punctures are fixed diagonalizable conjugacy classes of types $\lambda^{(1)}$, $\lambda^{(2)}$,\ldots,$\lambda^{(k)}$ satsifying certain genericity condition.
\end{conjecture}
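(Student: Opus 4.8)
The plan is to deduce the conjecture from the arithmetic of the character variety, following the strategy of Hausel, Letellier and Rodriguez-Villegas, splitting it into the computation of the $E$-polynomial by point counting over finite fields and the finer reconstruction of the weight filtration. Fix generic semisimple conjugacy classes $\mathcal C_1,\ldots,\mathcal C_k\subset\mathrm{GL}_n$ of types $\lambda^{(1)},\ldots,\lambda^{(k)}$ and let
$$
\mathcal M \;=\; \bigl\{(C_1,\ldots,C_k)\in \mathcal C_1\times\cdots\times\mathcal C_k \;:\; C_1 C_2\cdots C_k = 1\bigr\}\big/\mathrm{PGL}_n .
$$
By spreading out over a ring of integers and applying Katz's theorem, it suffices to show that $\mathcal M$ is polynomial count and to identify the counting polynomial. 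The first step is therefore to compute $\#\mathcal M(\FF_q)$ via the Frobenius mass formula: up to elementary factors the number of $\FF_q$-tuples with $C_1\cdots C_k=1$ is
$$
\frac{|\mathcal C_1(\FF_q)|\cdots|\mathcal C_k(\FF_q)|}{|\mathrm{GL}_n(\FF_q)|}\sum_{\chi\in\mathrm{Irr}\,\mathrm{GL}_n(\FF_q)}\frac{\chi(c_1)\cdots\chi(c_k)}{\chi(1)^{k-2}},
$$
where $c_i$ represents $\mathcal C_i(\FF_q)$.

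The key step is to evaluate this character sum. Using Green's parametrization of $\mathrm{Irr}\,\mathrm{GL}_n(\FF_q)$ and the resulting expression of the character values on semisimple elements through Hall--Littlewood symmetric functions, the genericity hypothesis forces the mixed terms to cancel, so the sum collapses to one over partitions $\mu\vdash n$; in it $\chi(1)$ contributes a $q$-hook-length product and each $\chi(c_i)$ contributes, up to normalization, the Hall pairing of a modified Macdonald polynomial (specialized so that one Macdonald parameter equals the counting variable $q$ and the other vanishes) against $h_{\lambda^{(i)}}$. Reorganizing the sum and applying the reproducing-kernel identity for the Macdonald scalar product turns the whole expression, up to an explicit monomial in $q$ and a sign, into the value of
$$
\bigl(\Omega[X_1,\ldots,X_k],\; h_{\lambda^{(1)}}[X_1]\cdots h_{\lambda^{(k)}}[X_k]\bigr)_{X_1,\ldots,X_k}
$$
at the specialization of $q,t$ that computes the $E$-polynomial (in the normalization where $q$ records the weight and $t$ the cohomological degree, this is $t\mapsto -1$). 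By the definition of $\bH$ this equals, up to the same monomial, the corresponding specialization of $\bH(h_{\lambda^{(1)}},\ldots,h_{\lambda^{(k)}})$. Hence $\mathcal M$ is polynomial count and its $E$-polynomial is the predicted specialization of $\bH$; in particular the conjecture holds after specializing $t$.

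The main obstacle is to pass from the $E$-polynomial to the full mixed Hodge polynomial. Point counting records only the Euler characteristic $\sum_k(-1)^k\dim\mathrm{gr}^W H^k_c(\mathcal M)$ of each weight-graded piece, whereas $\bH(h_{\lambda^{(1)}},\ldots,h_{\lambda^{(k)}})$, as a polynomial in $q$ and $t$, encodes the bigraded dimensions $\dim\mathrm{gr}^W_{2w}H^k_c(\mathcal M)$ themselves; recovering the cohomological grading requires showing that $H^*_c(\mathcal M)$ is of Hodge--Tate type with weight filtration governed by the rule predicted by $\bH$, which in turn forces the coefficients of $\bH(h_{\lambda^{(1)}},\ldots,h_{\lambda^{(k)}})$ to lie in $\Z_{\geq 0}[q,t]$, exhibit the curious hard Lefschetz symmetry, and have no cancellation in the $E$-polynomial specialization. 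I would attack this either geometrically --- by producing a stratification or fibration of $\mathcal M$ that builds its cohomology from Tate classes in a controlled way, for instance through the presentation of $\mathcal M$ as a multiplicative quiver variety for a star-shaped quiver à la Crawley-Boevey--Van den Bergh, where analogous purity statements are available --- or through non-abelian Hodge theory, transporting the weight filtration to the perverse filtration of the Hitchin fibration on the corresponding moduli of parabolic Higgs bundles and invoking a $P=W$-type comparison together with the decomposition theorem. This last part lies genuinely beyond the plethystic machinery developed above; the Garsia--Haiman--Tesler identities used elsewhere in the paper are, however, precisely the tool needed to confirm that the candidate $\bH$ has the structural features that such a cohomological description would impose.
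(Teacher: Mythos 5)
This statement is a \emph{conjecture} in the paper, not a theorem: the author takes it as an input (citing \cite{hausel2011arithmetic} for the precise formulation) and never offers a proof, so there is no argument of the paper's to compare yours against. What you have written is, accordingly, not a proof either, and you are candid about this at the end --- but it is worth being precise about exactly where the gap sits. The first half of your proposal (spreading out, Katz's theorem, the Frobenius mass formula, Green's parametrization, collapsing the character sum via genericity, and matching the result with a specialization of $\Omega$) is a correct sketch of what Hausel, Letellier and Rodriguez-Villegas actually prove: the character variety is polynomial count and its $E$-polynomial is the corresponding specialization of $\bH(h_{\lambda^{(1)}},\ldots,h_{\lambda^{(k)}})$. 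This is a theorem, and reproducing its outline is fine.

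The genuine gap is everything after ``the main obstacle.'' Passing from the $E$-polynomial (a single specialization, which only sees the weight-graded Euler characteristics) to the full two-variable mixed Hodge polynomial is precisely the open content of the conjecture, and your proposal replaces an argument with a list of strategies one might try (purity via a multiplicative quiver variety presentation, or a $P=W$-type comparison through non-abelian Hodge theory). Neither is carried out, and neither was available as a complete argument at the time of this paper. Note also that even the first clause of the conjecture --- that the coefficients of $\bH$ are polynomials in $q$ and $t$ rather than merely rational functions --- is not delivered by the point count, since that only controls a one-parameter specialization; polynomiality in both variables is itself part of what must be proved. So the honest summary is: your proposal correctly isolates the known half of the statement and correctly names the unknown half, but it does not close the gap, and the paper does not claim to either.
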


The precise formulation can be found in \cite{hausel2011arithmetic}.
\begin{rem}
We have the following identity.
$$
\bH[X_1,\ldots,X_k,1] = \bH[X_1,\ldots,X_k]
$$
\end{rem}

\begin{rem}
We have $\bH[X_1,X_2]=X_1 X_2$.
\end{rem}

\begin{defn}
The Macdonald coproduct $\Delta'$ is defined by 
$$
\Delta' H_\lambda = H_\lambda \otimes H_\lambda \qquad(\lambda\in\cP).
$$
\end{defn}

\begin{rem}
We can obtain $\Omega$ by iterated application of the Macdonald coproduct starting from the case $k=1$.
\end{rem}

\section{Plethystic identities}
The purpose of this section is to demonstrate some techniques in dealing with Macdonald polynomials. There are several works studying plethystic identities related to the Macdonald polynomials. The most important identity, from which every other identity can be easily deduced seems to be the following:
\begin{thm}[\cite{garsia1999explicit}, Theorem I.3]
Let $\nabla, T, T^*:\Sym\to\Sym$ be the operators
$$
\nabla H_\lambda = H_\lambda[-1]\cdot H_\lambda,\qquad (TF)[X] = F[X+1], \qquad (T^* F)[X] = \pExp\left[\frac{X}{Q}\right] F[X].
$$
Let $V$ be the composition
$$
V:=\nabla T^* T.
$$
For each partition $\lambda$ define the generating functions
$$
B_\lambda:=\sum_{r,c\in\lambda} q^c t^r,\qquad D_\lambda:=-1-Q B_\lambda.
$$
Then
$$
V(H_\lambda) = \pExp\left[\frac{D_\lambda X}{Q}\right]\qquad (\lambda\in\cP).
$$
\end{thm}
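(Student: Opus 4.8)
The plan is to diagonalize $V$ against the Macdonald scalar product. The starting point is that $T^*$ is the $(-,-)_*$-adjoint of $T$, while $\nabla$ is $(-,-)_*$-self-adjoint. For the first claim, expand $F[X]=(F[Z],\pExp[XZ/Q])_{*,Z}$ by the reproducing kernel identity; then $F[X+1]=(F[Z],\pExp[Z/Q]\,\pExp[XZ/Q])_{*,Z}$, and interchanging the two $*$-pairings while using $(\pExp[XZ/Q],G[X])_{*,X}=G[Z]$ gives $(TF,G)_*=(F[Z],\pExp[Z/Q]\,G[Z])_{*,Z}=(F,T^*G)_*$. The self-adjointness of $\nabla$ is clear because $\{H_\lambda\}$ is orthogonal and $\nabla$ is diagonal in it. Combining, for all $\lambda,\mu\in\cP$,
\begin{align*}
(VH_\lambda,H_\mu)_*
&=(\nabla T^*T H_\lambda,H_\mu)_*=(T^*TH_\lambda,\nabla H_\mu)_*\\
&=H_\mu[-1]\,(TH_\lambda,TH_\mu)_*=H_\mu[-1]\,\bigl(H_\lambda[X+1],\,H_\mu[X+1]\bigr)_{*,X}.
\end{align*}

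Both $VH_\lambda$ and the conjectured value $\pExp[D_\lambda X/Q]$ lie in the completion of $\Sym$, so each is determined by its $*$-pairings with all $H_\mu$. Since $\pExp[XY/Q]=\sum_\nu H_\nu[X]H_\nu[Y]/\alpha_\nu$ is the reproducing kernel, setting $Y=D_\lambda$ yields $\bigl(\pExp[D_\lambda X/Q],H_\mu[X]\bigr)_{*,X}=H_\mu[D_\lambda]$. Hence the theorem is equivalent to the family of scalar identities
$$
H_\mu[-1]\,\bigl(H_\lambda[X+1],\,H_\mu[X+1]\bigr)_{*,X}\;=\;H_\mu[D_\lambda]\qquad(\lambda,\mu\in\cP).\qquad(\ast)
$$
As a sanity check, for $\mu=\emptyset$ both sides are $1$, and for $\lambda=\mu=(1)$ one has $H_{(1)}=p_1$, $(p_1+1,p_1+1)_*=Q+1$, $H_{(1)}[-1]=-1$, $D_{(1)}=-1-Q$, and $(-1)(Q+1)=-1-Q$.

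Everything up to here is formal; the content lies in $(\ast)$. Its left-hand side is symmetric in $\lambda$ and $\mu$, so $(\ast)$ already entails Macdonald's reciprocity $H_\mu[D_\lambda]/H_\mu[-1]=H_\lambda[D_\mu]/H_\lambda[-1]$; but it says strictly more, since one must also identify the pairing $\bigl(H_\lambda[X+1],H_\mu[X+1]\bigr)_*$ as an explicit symmetric function of the pair. I would prove $(\ast)$ by induction on $|\lambda|$. The base case $\lambda=\emptyset$ is immediate: $H_\emptyset[X+1]=1$, $D_\emptyset=-1$, and both sides equal $H_\mu[-1]$. For the inductive step one passes from a subpartition $\lambda^-$ to $\lambda$ by adding a cell at position $(r_0,c_0)$, so that $B_\lambda=B_{\lambda^-}+q^{c_0}t^{r_0}$ and $D_\lambda=D_{\lambda^-}-Qq^{c_0}t^{r_0}$; one expands $H_\lambda[X+1]$ through the coproduct of $\Sym$ and $H_\mu[D_\lambda]$ around $D_{\lambda^-}$ via $F[Z+W]=\sum_\nu(s_\nu^\perp F)[Z]\,s_\nu[W]$, and then matches the two expansions using the Pieri rules for the Macdonald polynomials $H_\gamma$ (the explicit $H$-basis expansions of $e_1 H_\gamma$ and of $e_1^\perp H_\gamma$, whose coefficients are assembled from the factors of $\alpha_\gamma$) together with standard evaluations of $H_\mu$ such as the value of $H_\mu[-1]$ and the product formula for $H_\mu[1-u]$.

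The main obstacle is exactly this last step. The Pieri coefficients are products of factors of the form $q^{a}-t^{\ell+1}$ and $q^{a+1}-t^{\ell}$, and arranging for them to telescope (over the sum over cells) into the single generating function $B_\lambda=\sum_{r,c\in\lambda}q^ct^r$ that enters $D_\lambda$ is the combinatorial heart of the matter, essentially equivalent to a proof of Macdonald's reciprocity. If one instead quotes reciprocity (Macdonald, or \cite{garsia1996remarkable}) as known, then $(\ast)$ reduces to the separate and comparatively mechanical task of evaluating $\bigl(H_\lambda[X+1],H_\mu[X+1]\bigr)_*$ by the same coproduct and Pieri expansion; together these give the theorem.
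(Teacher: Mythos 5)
The paper does not actually prove this theorem; it is imported verbatim from Garsia--Haiman--Tesler \cite{garsia1999explicit}, so there is no internal proof to compare against. That said, your formal reductions are correct and cleanly executed: $T^*$ is indeed the $*$-adjoint of $T$, $\nabla$ is $*$-self-adjoint because it is diagonal in the $*$-orthogonal basis $\{H_\lambda\}$, $\pExp[XY/Q]$ is the reproducing kernel for $(-,-)_*$, and the theorem is exactly equivalent to your family of scalar identities $(\ast)$. The sanity checks for $\mu=\emptyset$ and $\lambda=\mu=(1)$ are also right, and the observation that $(\ast)$ subsumes the $u\to\infty$ limit of Koornwinder--Macdonald reciprocity is a good one (the paper in fact derives that duality \emph{from} this theorem).

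The genuine gap is that $(\ast)$ is never proved, and $(\ast)$ is the entire content of the theorem. The inductive scheme (add a cell to $\lambda$, expand $H_\lambda[X+1]$ and $H_\mu[D_\lambda]$ via the coproduct, match using the Pieri rules, telescope over cells) is a plan rather than an argument: you do not write down the Pieri coefficients, you do not state an inductive hypothesis strong enough to close the induction (the identity involves the full pairing $\bigl(H_\lambda[X+1],H_\mu[X+1]\bigr)_*$ for all $\mu$ simultaneously, and it is not clear that knowing $(\ast)$ for $\lambda^-$ determines it for $\lambda$ without further input), and you yourself label this step ``the main obstacle.'' The proposed fallback does not repair this: quoting reciprocity only gives the symmetry of $H_\mu[D_\lambda]/H_\mu[-1]$ in $\lambda,\mu$, whereas $(\ast)$ asserts its \emph{identification} with the pairing $\bigl(H_\lambda[X+1],H_\mu[X+1]\bigr)_*$, and that identification is precisely the theorem --- so the ``comparatively mechanical task'' you defer to is the whole problem restated. (It would also invert the logical order of this paper, where duality is a corollary of the present theorem, though independent proofs of reciprocity do exist.) As written, you have established only that the theorem is equivalent to $(\ast)$; completing the proof requires actually carrying out the Pieri/induction computation or reproducing the derivation in \cite{garsia1999explicit}.
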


\begin{rem}
The operators $T$, $T^*$ are conjugate with respect to the Macdonald scalar product.
\end{rem}

\begin{rem}
Our operator $\nabla$ differs from the original one by the sign $(-1)^{|\lambda|}$. This choice is made in order to eliminate the operators ${^-}(\cdot)$ and $\omega$ from our notation, because they interact poorly with plethystic substitution.
\end{rem}

\begin{rem}
The theorem implies that $V$ transforms the Macdonald coproduct to the usual coproduct:
\vspace{.5cm}
\begin{center}
\begin{tikzpicture}
\node (1) {};
\node (2) [right = 1.5cm of 1]{$\Delta'$};
\node (3) [above right =1cm and 1cm of 2] {};
\node (4) [below right=1.1cm and 1cm of 2] {}; 

\path[draw, thick, ->,above]
(1) edge node {} (2)
(2) edge node {$V$} (3);
\path[draw, thick, ->,right]
(2) edge node {$V$} (4);

\node (5) [right=3cm of 2] {$=$};
\node (6) [right = 2cm of 5] {};
\node (7) [right = 1.5cm of 6] {$\Delta$};
\node (8) [above right = 1cm and 1cm of 7] {};
\node (9) [below right =1cm and 1cm of 7] {};

\path[draw, thick, ->, above]
(6) edge node {$V$} (7)
(7) edge node {} (8)
(7) edge node {} (9);
\end{tikzpicture}
\end{center}
\end{rem}

Another interesting statement can be obtained by conjugation:
\begin{prop}\label{prop:deltaop}
For each $F\in \Sym$ let $\Delta_F$ be the operator
$$
\Delta_F:\Sym\to\Sym\qquad \Delta_F H_\lambda = F[D_\lambda] H_\lambda.
$$
Let $M_F$ be the operator of multiplication by $F$. Then
$$
V^* M_F = \Delta_F V^*,
$$
where $V^*=T^* T \nabla$ is the operator conjugate to $V$.
\end{prop}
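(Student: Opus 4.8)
The plan is to prove the identity by conjugating the main theorem, i.e. by working with the adjoints relative to the Macdonald scalar product $(-,-)_*$. Write $A^\dagger$ for the $(-,-)_*$-adjoint of an operator $A$. Since $T$ and $T^*$ are conjugate (the remark after the theorem) and $\nabla$ is diagonal in the $(-,-)_*$-orthogonal basis $\{H_\lambda\}$, hence self-adjoint, we get $V^\dagger=(\nabla T^* T)^\dagger=T^*T\nabla=V^*$, as asserted; likewise $\Delta_F$ is diagonal in $\{H_\lambda\}$, so $\Delta_F^\dagger=\Delta_F$. As $(H_\lambda,H_\lambda)_*=\alpha_\lambda\neq 0$, the form $(-,-)_*$ is non-degenerate, so $V^* M_F=\Delta_F V^*$ follows once we check equality of the matrix entries $(V^* M_F G,\,H_\mu)_*=(\Delta_F V^* G,\,H_\mu)_*$ for all $G\in\Sym$ and $\mu\in\cP$. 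Moving the adjoints across (using $(V^*)^\dagger=V$ and $\Delta_F^\dagger H_\mu=F[D_\mu]H_\mu$), this is equivalent to
$$
(F[X]G[X],\; V H_\mu)_* \eq F[D_\mu]\,(G[X],\; V H_\mu)_* \qquad (F,G\in\Sym,\ \mu\in\cP).
$$

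Next I would evaluate both sides using the explicit formula $VH_\mu=\pExp[D_\mu X/Q]$ supplied by the theorem. By the very definition of the Macdonald scalar product, for any $A\in\Sym$,
$$
(A[X],\; VH_\mu)_* \eq \left(A[X],\; \pExp\!\left[\tfrac{D_\mu\,(QX)}{Q}\right]\right) \eq (A[X],\; \pExp[D_\mu X]),
$$
the last being the ordinary Hall pairing, since the substitution $X\mapsto QX$ turns $D_\mu X/Q$ into $D_\mu X$. Now the reproducing kernel identity $(A[X],\pExp[XY])_X=A[Y]$ is an identity in $\Sym[Y]$, so I may apply the $\lambda$-ring homomorphism $\Sym[Y]\to\Q(q,t)$, $Y\mapsto D_\mu$, to both sides, obtaining $(A[X],\pExp[D_\mu X])_X=A[D_\mu]$. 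Taking $A=FG$ and $A=G$ respectively, the left-hand side of the displayed identity equals $(FG)[D_\mu]=F[D_\mu]\,G[D_\mu]$ and the right-hand side equals $F[D_\mu]\,G[D_\mu]$; they agree, which completes the argument.

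There is no genuine obstacle here — the proof is essentially a bookkeeping exercise once it is organized this way. The points that do require care are: verifying the adjoint relations ($T^\dagger=T^*$ is given, $\nabla$ and $\Delta_F$ are self-adjoint because diagonal in the orthogonal basis $\{H_\lambda\}$, hence $V^\dagger=V^*$); the non-degeneracy of $(-,-)_*$, so that matching all matrix entries against the basis $\{H_\mu\}$ forces the operator identity; and the legitimacy of specializing the free variable $Y$ to $D_\mu$ in the reproducing kernel identity. Conceptually the content is simply that $D_\mu$ is the spectral parameter attached to $VH_\mu$, so that multiplying by $F$ before applying $V^*$ is the same as rescaling $H_\mu$ by the scalar $F[D_\mu]$.
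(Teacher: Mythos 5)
Your proof is correct and takes essentially the same route as the paper: both verify the operator identity by computing matrix entries against the orthogonal basis $\{H_\mu\}$, moving $V^*$ and $\Delta_F$ across the Macdonald pairing by adjointness, and then evaluating $(A[X],VH_\mu)_*=A[D_\mu]$ via the explicit formula $VH_\mu=\pExp[D_\mu X/Q]$ and the reproducing-kernel identity. The only difference is that you test against a general $G$ and spell out the adjointness and specialization steps that the paper leaves implicit.
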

\begin{proof}
One way to see this is to mentally conjugate the pictures above. A more explicit way is to write both sides in the Macdonald basis. For each $\lambda, \mu\in\cP$
$$
(V^* (F H_\lambda),\; H_\mu)_* = F[D_\mu] H_\lambda[D_\mu],
$$
$$
(\Delta_F V^* H_\lambda,\; H_\mu)_* = F[D_\mu] (V^* H_\lambda,\; H_\mu)_*= F[D_\mu] H_\lambda[D_\mu].
$$
\end{proof}

Another statement is an interesting symmetry:
\begin{prop}[Macdonald-Koornwinder duality]
Consider
$$
\pExp[u B_\lambda]\, H_\lambda[1+u D_\mu] = \frac{H_\lambda[1+u D_\mu]}{\prod_{r,c\in\lambda} (1-u q^c t^r)},
$$
where $u$ is a new monomial variable. This expression is symmetric in $\lambda, \mu\in\cP$.
\end{prop}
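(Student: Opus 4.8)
The plan is to reduce the statement to the classical symmetry of Macdonald polynomials and transport the latter through the standard plethystic dictionary relating $H_\lambda$ to the ordinary Macdonald polynomial $P_\lambda$.

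First I would rewrite the expression in a manifestly more symmetric shape. Since $u$ and every $q^ct^r$ are monomial, $\pExp[-uB_\lambda]=\prod_{(r,c)\in\lambda}(1-uq^ct^r)$; on the other hand this product is exactly the known principal specialization $H_\lambda[1-u]$ of the modified Macdonald polynomial. Hence $\pExp[uB_\lambda]=\pExp[-uB_\lambda]^{-1}=H_\lambda[1-u]^{-1}$, and, noting $1-u=1+uD_\emptyset$, the assertion is equivalent to the symmetry in $\lambda,\mu$ of
$$R(\lambda,\mu)\;:=\;\frac{H_\lambda[1+uD_\mu]}{H_\lambda[1+uD_\emptyset]}.$$

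Next I would pass to ordinary Macdonald polynomials through $H_\mu[X;q,t]=t^{n(\mu)}J_\mu\!\big[\tfrac{X}{1-t^{-1}};q,t^{-1}\big]$, where $J_\mu$ is the integral form and $n(\mu)=\sum_i(i-1)\mu_i$; all alphabet-independent normalizations cancel in the ratio, giving
$$R(\lambda,\mu)\;=\;\frac{P_\lambda\!\big[\tfrac{1+uD_\mu}{1-t^{-1}};\,q,\,t^{-1}\big]}{P_\lambda\!\big[\tfrac{1-u}{1-t^{-1}};\,q,\,t^{-1}\big]}.$$
The key point is that these two alphabets are principal-specialization alphabets for the parameter $s:=t^{-1}$: if $n\in\Z_{>0}$ and $u=t^{-n}$ then $\tfrac{1-u}{1-t^{-1}}=1+s+\dots+s^{n-1}$, while a short computation using $D_\mu=-1-(q-1)(1-t)B_\mu$ shows $\tfrac{1+uD_\mu}{1-t^{-1}}=q^{\mu_1}s^{n-1}+q^{\mu_2}s^{n-2}+\dots+q^{\mu_n}s^{0}$ (with $\mu_i=0$ for $i>\ell(\mu)$). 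Therefore, for all $n\ge\max(\ell(\lambda),\ell(\mu))$,
$$R(\lambda,\mu)\;=\;\frac{P_\lambda(q^{\mu_1}s^{n-1},\dots,q^{\mu_n};\,q,\,s)}{P_\lambda(s^{n-1},\dots,s,1;\,q,\,s)},$$
which is symmetric in $\lambda,\mu$ by Macdonald's classical symmetry. Since $R(\lambda,\mu)$ is a rational function of $u$ — $H_\lambda[1+uD_\mu]$ is a polynomial in $u$ of degree $\le|\lambda|$ and $H_\lambda[1-u]=\prod(1-uq^ct^r)$ — and $R(\lambda,\mu)=R(\mu,\lambda)$ holds at the infinitely many values $u=t^{-n}$, it holds identically, which is the claim.

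The main obstacle is not a single hard computation but the careful bookkeeping in the middle steps: verifying that $\tfrac{1+uD_\mu}{1-t^{-1}}$ really is the $q$-shifted principal specialization at parameter $t^{-1}$ (the inversion $t\mapsto t^{-1}$ coming from the $H\!\leftrightarrow\!J$ dictionary must be tracked through every product over cells), and supplying the auxiliary identity $H_\lambda[1-u]=\prod_{(r,c)\in\lambda}(1-uq^ct^r)$. I would also record why the obvious attempt to stay inside the present framework does not shortcut this: from the quoted theorem one gets $(V^*G,H_\mu)_*=G[D_\mu]$ for all $G$, hence $R(\lambda,\mu)=(V^*G_\lambda,H_\mu)_*$ with $G_\lambda[X]=H_\lambda[1+uX]/H_\lambda[1-u]$ (the scalar $H_\lambda[1-u]$ pulling out of the plethysm), so the claim is that the matrix $(V^*G_\lambda,H_\mu)_*$ is symmetric; but expanding $G_\lambda$ in the Macdonald basis and using $VH_\mu=\pExp[D_\mu X/Q]=\sum_\nu \alpha_\nu^{-1}H_\nu[D_\mu]H_\nu$ together with orthogonality just returns $R(\lambda,\mu)$, so the symmetry genuinely encodes Macdonald's duality and must be fed in from outside.
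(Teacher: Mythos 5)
Your proof is correct, but it takes a genuinely different route from the paper. The paper stays entirely inside the plethystic framework: it rewrites $\pExp[uB_\lambda]H_\lambda[1+uD_\mu]$ (up to a factor symmetric in $\lambda,\mu$) as the Macdonald pairing $\left((T^*TH_\lambda)[X],\,(\nabla T^*TH_\mu)[uX]\right)_*$, using the quoted Garsia--Haiman--Tesler identity $VH_\mu=\pExp[D_\mu X/Q]$, and then the symmetry is manifest because $\nabla$ is self-adjoint and degree-preserving. In particular the paper \emph{derives} the principal specialization $H_\lambda[1-u]=\prod_{r,c\in\lambda}(1-uq^ct^r)$ as the $\mu=()$ case (Corollary \ref{cor:specu}). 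You instead import two external inputs: that specialization identity (which you correctly flag must be supplied independently, e.g.\ from Macdonald's $J_\lambda\left[\frac{1-u}{1-t}\right]$ formula --- you cannot cite Corollary \ref{cor:specu} without circularity), and Macdonald's classical duality $P_\lambda(q^\mu s^\delta)/P_\lambda(s^\delta)=P_\mu(q^\lambda s^\delta)/P_\mu(s^\delta)$, which you reach via the $H\leftrightarrow J\leftrightarrow P$ dictionary, the computation $\frac{1+uD_\mu}{1-t^{-1}}\big|_{u=t^{-n}}=\sum_i q^{\mu_i}s^{n-i}$ (which checks out), and a density-in-$u$ argument. What your route buys is an explanation of why the statement deserves the name Macdonald--Koornwinder duality: it literally \emph{is} the classical duality in plethystic clothing. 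What the paper's route buys is self-containedness modulo the GHT theorem, plus Corollary \ref{cor:specu} for free. One small correction: your closing remark that the symmetry ``must be fed in from outside'' is too strong --- the self-adjointness of $\nabla$ combined with $VH_\mu=\pExp[D_\mu X/Q]$ does yield it internally, which is exactly the paper's argument; your attempted internal derivation only went in circles because it did not exploit that $\nabla$ is self-adjoint and commutes with the grading.
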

\begin{proof}
It is equivalent to symmetry of the following expression:
$$
\pExp\left[\frac{u D_\mu}Q\right]\, H_\lambda[1+u D_\mu].
$$
We notice that this can be written as
$$
\left(T^* T H_\lambda,\; \pExp\left[\frac{uX D_\mu}Q\right]\right)_*
=\left((T^* T H_\lambda)[X],\; (\nabla T^* T H_\mu)[uX]\right)_*.
$$
Since $\nabla$ preserves the degree of symmetric functions and is self-adjoint, the last expression is symmetric in $\mu$ and $\lambda$.
\end{proof}

Setting $\mu=()$ we obtain
\begin{cor}\label{cor:specu}
$$
H_\lambda[1-u] = \pExp[-uB_\lambda] = \prod_{r,c\in\lambda} (1-u q^c t^r)\qquad(\lambda\in\cP).
$$
\end{cor}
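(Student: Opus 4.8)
The plan is to obtain the corollary as the specialization $\mu = ()$ of the Macdonald--Koornwinder duality just established. Write $E(\lambda,\mu) := \pExp[u B_\lambda]\,H_\lambda[1 + u D_\mu]$ for the expression appearing there, which the proposition asserts is symmetric: $E(\lambda,\mu) = E(\mu,\lambda)$. First I would record the values of the relevant invariants at the empty partition: since $()$ has no cells, $B_{()} = 0$, hence $D_{()} = -1 - Q B_{()} = -1$; and $H_{()} = 1$ as an element of $\Sym$ (the defining triangularity conditions together with $H_{()}[1]=1$ force it), so $H_{()}[Y] = 1$ for every $Y$.

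Evaluating $E$ at $(\lambda, ())$ gives $\pExp[u B_\lambda]\,H_\lambda[1 + u D_{()}] = \pExp[u B_\lambda]\,H_\lambda[1 - u]$, while evaluating it at $((), \lambda)$ gives $\pExp[u B_{()}]\,H_{()}[1 + u D_\lambda] = \pExp[0]\cdot 1 = 1$. By the symmetry $E(\lambda,()) = E((),\lambda)$ these are equal, so $\pExp[u B_\lambda]\,H_\lambda[1 - u] = 1$. Since $\pExp[X]\pExp[-X] = \pExp[0] = 1$, inverting yields $H_\lambda[1 - u] = \pExp[-u B_\lambda]$, the first claimed identity.

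For the second identity I would expand $B_\lambda = \sum_{r,c \in \lambda} q^c t^r$, so that $\pExp[-u B_\lambda] = \prod_{r,c \in \lambda} \pExp[-u q^c t^r]$ by additivity of $\pExp$. Each $u q^c t^r$ is a product of the monomial elements $u$, $q$, $t$ and hence monomial, and for a monomial element $y$ one has $\pExp[-y] = \exp\left(-\sum_{n \geq 1} \frac{y^n}{n}\right) = 1 - y$; taking $y = u q^c t^r$ gives $\prod_{r,c \in \lambda}(1 - u q^c t^r)$, as required.

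There is essentially no obstacle here, since all of the content sits in the Macdonald--Koornwinder duality. The only points that need care are the bookkeeping at $\mu = ()$ — that $H_{()} = 1$ and $D_{()} = -1$ — and the monomiality of $u q^c t^r$ in the last step, which is what makes $\pExp[-u q^c t^r]$ collapse to the single term $1 - u q^c t^r$ rather than an infinite geometric series.
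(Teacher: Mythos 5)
Your proof is correct and follows exactly the paper's route: the paper derives this corollary by the single phrase ``Setting $\mu=()$'' in the Macdonald--Koornwinder duality, and you have simply filled in the bookkeeping ($B_{()}=0$, $D_{()}=-1$, $H_{()}=1$) together with the standard evaluation $\pExp[-y]=1-y$ for monomial $y$. Nothing further is needed.
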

In the limit $u\to\infty$ we obtain
\begin{cor}
$$
H_\lambda[-1] = (-1)^{|\lambda|} \prod_{r,c\in\lambda} q^c t^r = (-1)^{|\lambda|} q^{n(\lambda')} t^{n(\lambda)} \qquad(\lambda\in\cP).
$$
\end{cor}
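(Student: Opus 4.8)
The plan is to read the statement off Corollary~\ref{cor:specu} by isolating the top-degree term in $u$, which is exactly what the limit $u\to\infty$ extracts. The only structural fact I will use beyond that corollary is that $H_\lambda$ is a homogeneous symmetric function of degree $|\lambda|$, so that $H_\lambda[cX]=c^{|\lambda|}H_\lambda[X]$ for every monomial element $c$; this is the same homogeneity used in the proof of Macdonald--Koornwinder duality just above.

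First I would substitute $u\mapsto v^{-1}$ in Corollary~\ref{cor:specu}. This is a legitimate plethystic substitution because $v^{-1}$ is again monomial: since each $p_n$ is a ring homomorphism, $p_n[v^{-1}]=p_n[v]^{-1}=v^{-n}=(v^{-1})^n$. This turns the corollary into
$$
H_\lambda[\,1-v^{-1}\,]=\prod_{r,c\in\lambda}\bigl(1-v^{-1}q^ct^r\bigr).
$$
Next I would factor $1-v^{-1}=v^{-1}(v-1)$ on the left and use homogeneity to pull out $v^{-|\lambda|}$, and likewise factor $v^{-1}$ out of each of the $|\lambda|$ factors on the right. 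After cancelling $v^{-|\lambda|}$ this gives the polynomial identity
$$
H_\lambda[v-1]=\prod_{r,c\in\lambda}\bigl(v-q^ct^r\bigr)
$$
in the monomial variable $v$; equivalently, $u^{-|\lambda|}H_\lambda[1-u]=H_\lambda[u^{-1}-1]$ is a polynomial in $u^{-1}$ whose value at $u^{-1}=0$ is the sought leading coefficient, which is the precise meaning of the limit $u\to\infty$.

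Finally I would specialize $v=0$ (valid, since $0$ is monomial), obtaining $H_\lambda[-1]=\prod_{r,c\in\lambda}(-q^ct^r)=(-1)^{|\lambda|}\prod_{r,c\in\lambda}q^ct^r$. To conclude, I would rewrite the product: with the cells of $\lambda$ carrying zero-based row index $r$ and column index $c$ (the convention already fixed by the definition of $B_\lambda$), one has $\sum_{r,c\in\lambda}c=n(\lambda')$ and $\sum_{r,c\in\lambda}r=n(\lambda)$, hence $\prod_{r,c\in\lambda}q^ct^r=q^{n(\lambda')}t^{n(\lambda)}$. I do not expect a genuine obstacle here: all the mathematical content sits in Corollary~\ref{cor:specu}, and what remains is the bookkeeping above. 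The only points deserving a word of care are the justification that the $u\to\infty$ limit is literally the evaluation of a polynomial in $u^{-1}$ at $0$ (handled by the homogeneity rescaling), and the combinatorial identification of $\sum r$ and $\sum c$ with $n(\lambda)$ and $n(\lambda')$, which is purely a matter of the indexing convention for Young diagrams.
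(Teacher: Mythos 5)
Your proposal is correct and is essentially the paper's own argument: the paper simply says ``in the limit $u\to\infty$'' applied to Corollary~\ref{cor:specu}, and your substitution $u=v^{-1}$ together with the homogeneity $H_\lambda[v^{-1}(v-1)]=v^{-|\lambda|}H_\lambda[v-1]$ is exactly the right way to make that limit precise. The remaining bookkeeping ($0$ being monomial, and $\sum c=n(\lambda')$, $\sum r=n(\lambda)$ under the zero-based indexing of $B_\lambda$) is all in order.
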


\section{An interesting bijection}
In view of Corollary \ref{cor:specu} is seems natural to evaluate $\Omega$ on arguments of type $1-u$ where $u$ is monomial. Consider
$$
A:=\Omega[X, 1-y_1, 1-y_2,\ldots,1-y_k],
$$
where $y_1, y_2, \ldots$ are monomial variables. Set $Y=\sum_{i=1}^k y_i$. By Corollary \ref{cor:specu} we can write it as
$$
A=\sum_{\lambda\in\cP} \frac{H_\lambda[X] \pExp[-B_\lambda Y]}{\alpha_\lambda} = \pExp\left[\frac{Y}{Q}\right] \sum_{\lambda\in\cP} \frac{H_\lambda[X] \pExp\left[\frac{D_\lambda Y}{Q}\right]}{\alpha_\lambda}.
$$
Thus to obtain the sum on the right we simply need to apply $V$ to $\pExp\left[\frac{XY}Q\right]$ with respect to the variable $Y$. Since $\pExp\left[\frac{XY}Q\right]$ is the reproducing kernel, or simply by a direct computation we see that the result will be the same if we apply the conjugate operator $V^*$ with respect to the variable $X$:
$$
A = \pExp\left[\frac{Y}{Q}\right] V^* \pExp\left[\frac{XY}{Q}\right].
$$
Applying $\nabla$ to $\pExp\left[\frac{XY}Q\right]$ we obtain $\Omega[X, Y, -1]$, and then $T$ and $T^*$ can be computed directly:
$$
A = \pExp\left[\frac{X+Y}Q\right] \Omega[X+1, Y, -1].
$$

Thus we have
$$
\Omega[X, 1-y_1, 1-y_2,\ldots,1-y_k] = \pExp\left[\frac{X+Y}Q\right] \Omega[X+1, Y, -1].
$$
This is convenient because now $Q\cdot\pLog[-]$ can be taken from both sides.
$$
\bH[X, 1-y_1, 1-y_2,\ldots,1-y_k] = X+Y + \bH[X+1, Y, -1].
$$

For any symmetric function $F$ and monomial $u$ we have
$$
F[1-u] = (F[X],\, \pExp[X(1-u)]) = \left(F,\, \sum_{i,j=0}^\infty (-1)^j u^j h_i e_j\right),
$$
$$
F[-1] = \left(F,\, \sum_{i=0}^\infty (-1)^i e_i\right),
$$
$$
(F[X+1], h_\lambda[X]) = (F[X],\, \pExp[X] h_\lambda[X]) = \left(F, \sum_{i=0}^\infty h_{\lambda, i}\right).
$$
So we can interpret our result in the following way
\begin{thm}
For each pair of partitions $\lambda$, $\mu$ such that $1\leq\mu_1\leq |\lambda|\leq|\mu|$ we have
$$
\bH(h_\lambda, h_{|\lambda|-\mu_1}e_{\mu_1}, \ldots, h_{|\lambda|-\mu_k}e_{\mu_k}) = \bH(h_{\lambda, |\mu|-|\lambda|}, h_\mu, e_{|\mu|}).
$$
\end{thm}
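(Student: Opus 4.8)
The plan is to feed the identity
$\bH[X, 1-y_1, \ldots, 1-y_k] = X + Y + \bH[X+1, Y, -1]$ (with $Y=\sum_{i=1}^k y_i$), already established above, into a single linear functional $\Phi$ on both sides: $\Phi$ pairs the first argument against $h_\lambda[X]$ and then extracts the coefficient of $y_1^{\mu_1}\cdots y_k^{\mu_k}$. The structural fact I would record first is that $\bH$ is \emph{diagonally multigraded}: every summand $H_\nu[X_1]\cdots H_\nu[X_j]/\alpha_\nu$ of $\Omega$ with $\nu\vdash n$ is multihomogeneous of degree $(n,n,\ldots,n)$, and $\pLog$, being built from products and plethysms, preserves this, so $\bH=\sum_n\bH^{(n)}$ with $\bH^{(n)}$ multihomogeneous of degree $(n,\ldots,n)$ in all its slots. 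Hence pairing any slot of $\bH$ against a symmetric function of degree $n$ isolates exactly $\bH^{(n)}$.

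For the left-hand side: pairing slot one against $h_\lambda[X]$ isolates $\bH^{(|\lambda|)}$, which then has degree $|\lambda|$ in every remaining slot. Applying $F[1-u]=(F,\sum_{i,j}(-1)^ju^jh_ie_j)$ in each of the $k$ slots (legitimate because $\mu_j\le\mu_1\le|\lambda|$), the coefficient of $y_j^{\mu_j}$ amounts to inserting $(-1)^{\mu_j}\,h_{|\lambda|-\mu_j}e_{\mu_j}$ into slot $j+1$; multiplying the signs gives $\Phi(\mathrm{LHS})=(-1)^{|\mu|}\,\bH(h_\lambda, h_{|\lambda|-\mu_1}e_{\mu_1},\ldots,h_{|\lambda|-\mu_k}e_{\mu_k})$.

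For the right-hand side: the terms $X$ and $Y$ vanish under $\Phi$ — $X$ is constant in the $y_i$ while $\mu_1\ge1$, and $Y$ carries no $X$-dependence while $|\lambda|\ge1$, so in each case one of the two operations returns $0$. For $\bH[X+1,Y,-1]$ I would peel off one slot at a time: the third slot via $F[-1]=(F,\sum_i(-1)^ie_i)$, where degree matching against the surviving component forces $e_{|\mu|}$ and produces a sign $(-1)^{|\mu|}$; the first slot via $(F[X+1],h_\lambda[X])=(F,\sum_i h_{\lambda,i})$, where degree matching forces $i=|\mu|-|\lambda|$ (this is where $|\lambda|\le|\mu|$ enters) and contributes $h_{\lambda,|\mu|-|\lambda|}$; and the second slot by the fact that for a symmetric function $G$ homogeneous of degree $|\mu|$ with $\ell(\mu)\le k$ one has $[y_1^{\mu_1}\cdots y_k^{\mu_k}]\,G[Y]=(G,h_\mu)$, since $G[Y]=G(y_1,\ldots,y_k)$ by Proposition~\ref{prop:ususaleval} and the coefficient of $y^\mu$ there is the coefficient of $m_\mu$ in the monomial expansion. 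Assembling these, $\Phi(\mathrm{RHS})=(-1)^{|\mu|}\,\bH(h_{\lambda,|\mu|-|\lambda|},h_\mu,e_{|\mu|})$, and cancelling $(-1)^{|\mu|}$ against the left-hand side gives the theorem.

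The individual steps are routine bookkeeping; the part requiring care is the degree matching across the three slots on the right, and this is exactly what the hypotheses $1\le\mu_1\le|\lambda|\le|\mu|$ guarantee — they force every slot to land on $\bH^{(|\mu|)}$ on the right (and every slot on $\bH^{(|\lambda|)}$ on the left) with no spurious terms, and they ensure $h_{|\lambda|-\mu_j}$ and $h_{\lambda,|\mu|-|\lambda|}$ are honest nonnegative-index complete homogeneous functions. The main potential obstacle is making sure the ``coefficient extraction $=$ Hall pairing'' step is always applied to a genuinely symmetric function of the correct degree, which is precisely why pinning down the diagonal multigrading of $\bH$ at the outset is the load-bearing preliminary.
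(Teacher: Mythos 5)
Your proposal is correct and follows essentially the same route as the paper: it starts from the identity $\bH[X,1-y_1,\ldots,1-y_k]=X+Y+\bH[X+1,Y,-1]$ and converts both sides via the three Hall-pairing formulas for $F[1-u]$, $F[-1]$ and $F[X+1]$. The only additions are bookkeeping the paper leaves implicit — the diagonal multigrading of $\bH$, the vanishing of the $X+Y$ terms, and the cancellation of the $(-1)^{|\mu|}$ signs — all of which you handle correctly.
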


An interesting special case is $\mu=1^k$. Because $e_1=h_1$ we obtain a somewhat simpler looking identity:
\begin{cor}\label{cor:final}
For integers $k\geq n \geq 1$ and $\lambda\vdash n$ we have
$$
\bH(h_\lambda,\, \underbrace{h_{n-1,1},\, \ldots,\, h_{n-1,1}}_\text{$k$ times})= \bH(h_{\lambda, k-n},\, h_{1^k},\, e_k).
$$
\end{cor}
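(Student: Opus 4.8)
The statement is the special case $\mu=1^k$ of the theorem above, so the plan is to substitute $\mu=1^k$ and simplify both sides. First I would check that the hypotheses specialize correctly: for $\mu=1^k$ we have $\mu_1=1$ and $|\mu|=k$, so the condition $1\leq\mu_1\leq|\lambda|\leq|\mu|$ becomes $1\leq 1\leq|\lambda|\leq k$, which, on writing $n=|\lambda|$, is exactly the hypothesis $k\geq n\geq 1$ with $\lambda\vdash n$ appearing in the corollary.

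Next I would simplify the two sides of the theorem's identity. On the left, $\mu$ has $k$ parts, all equal to $1$, so each of the $k$ factors $h_{|\lambda|-\mu_i}e_{\mu_i}$ equals $h_{n-1}e_1$; since $e_1=h_1=p_1$, we have $h_{n-1}e_1=h_{n-1}h_1=h_{n-1,1}$, the complete homogeneous function indexed by the partition $(n-1,1)$. Hence the left side becomes $\bH(h_\lambda,h_{n-1,1},\ldots,h_{n-1,1})$ with $k$ copies of $h_{n-1,1}$, which is the left side of the corollary. On the right, $|\mu|-|\lambda|=k-n$, so $h_{\lambda,|\mu|-|\lambda|}=h_\lambda h_{k-n}$, i.e.\ $h$ indexed by $\lambda$ with an extra part $k-n$ adjoined (when $k=n$ this is just $h_\lambda$, as $h_0=1$); moreover $h_\mu=h_{1^k}$ and $e_{|\mu|}=e_k$, so the right side is $\bH(h_{\lambda,k-n},h_{1^k},e_k)$. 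Combining the two simplifications finishes the reduction.

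There is no real obstacle beyond this bookkeeping; the only point of care is the boundary case $k=n$, where the part $k-n=0$ contributes the trivial factor $h_0=1$. If I wanted a self-contained argument I would instead read the identity off directly from the plethystic relation $\bH[X,1-y_1,\ldots,1-y_k]=X+Y+\bH[X+1,Y,-1]$ established above: pair both sides against $h_\lambda[X]$ in $X$ and extract the coefficient of $y_1y_2\cdots y_k$. In that route the step needing most care is the observation that $\bH$ is \emph{balanced in degree} --- in its monomial expansion the parts $\rho_0,\rho_1,\ldots$ occurring in $m_{\rho_0}[X_1]m_{\rho_1}[X_2]\cdots$ all have the same size --- which, after pairing against $h_\lambda[X]$, forces $|\rho_i|=n$ in each of the $k$ remaining slots on the left (so that the $y_i$-linear part of $m_{\rho_i}[1-y_i]$ produces exactly $h_{n-1}e_1$), and on the right forces the middle argument to be $h_{1^k}$ (the unique monomial function whose value at $y_1,\ldots,y_k$ contributes $y_1\cdots y_k$), with an $h_{k-n}$ adjoined to $\lambda$ in the first slot coming from $\bH[X+1,\cdots]$ and an $e_k$ in the last slot coming from the specialization at $-1$; the global sign $(-1)^k$ then cancels from the two sides.
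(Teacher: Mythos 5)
Your proposal is correct and matches the paper's own argument: the corollary is obtained exactly by setting $\mu=1^k$ in the preceding theorem and using $e_1=h_1$ so that each factor $h_{n-1}e_1$ becomes $h_{n-1,1}$, with the hypothesis $1\leq\mu_1\leq|\lambda|\leq|\mu|$ specializing to $k\geq n\geq 1$. Your careful handling of the boundary case $k=n$ (where $h_0=1$) is a nice touch, and the alternative self-contained sketch is a reasonable elaboration of how the theorem itself was derived, but the main route is the same as the paper's.
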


This work was presented at the workshop ``Arithmetic aspects of moduli spaces'', held in 2016 at the Bernoulli center, Lausanne. During the talk T. Hausel pointed out that Corollary \ref{cor:final} solves a conjecture from their recent work \cite{hausel2016arithmetic}. In their work the statement of Corollary \ref{cor:final} is interpreted as the Laplace transform of wild character varieties.

\printbibliography

\end{document}